\renewcommand\@biblabel[1]{}
\newtheorem{theorem}{Theorem}
\newtheorem{lemma}[theorem]{Lemma}
\newtheorem{prop}[theorem]{Proposition}
\newtheorem{conj}[theorem]{Conjecture}
\theoremstyle{definition}
\theoremstyle{remark}
\newtheorem{remark}[theorem]{Remark}
\theoremstyle{remark}
\numberwithin{equation}{theorem}
\numberwithin{equation}{section}
\numberwithin{theorem}{section}
\numberwithin{equation}{section}
\newcommand{\R}{\mathbb{R}}
\title[The Log Convex Density Conjecture in Hyperbolic Space]{The Log Convex Density Conjecture in Hyperbolic Space}
\author[\tiny{L. Di Giosia,  J. Habib,  L. Kenigsberg,  D. Pittman, W. Zhu}]{Leonardo Di Giosia,  Jahangir Habib,  Lea Kenigsberg,  Dylanger Pittman,  Weitao Zhu\vspace{-5ex}}
\begin{document}
\begingroup
\def\uppercasenonmath#1{}
\let\MakeUppercase\relax 
\maketitle
\endgroup








\begin{abstract}

The Euclidean log convex density theorem, proved by Gregory Chambers in 2015, asserts that in Euclidean space with a log-convex density spheres about the origin are isoperimetric. We provide a partial extension to hyperbolic space in which volume and perimeter densities are related but different.

\end{abstract}


\section{Introduction}

The Euclidean log convex density theorem, proved by Gregory Chambers [Ch] in 2015, asserts that in Euclidean space with a log convex density spheres about the origin are isoperimetric (minimize perimeter for given volume). The analogous conjecture in hyperbolic space, $\mathbb{H}^n$, says that for a log convex density, spheres about the origin are isoperimetric. We prove an easier version in which the perimeter is weighted by a larger density than the volume:

\vspace{.5cm}
\textbf{Theorem \ref{weakHyperbolic}}. \textit{Consider $\mathbb{H}^n$ with smooth, radial, log convex volume density $\phi(R)$ and perimeter density} $$\phi(R) \cdot2\cosh^2(R/2),$$

\textit{where $R$ is distance from the origin. Then spheres about the origin are uniquely isoperimetric.} 

\vspace{.5cm}
Note that a smooth radial density is log convex if and only if it is a log convex function of $\mathbb{R}$.

The problem of perimeter minimization is an ancient one. Two thousand years ago, Zenodorus showed that planar circles are perimeter minimizers for given enclosed area. Similar results were proven over the past 150 years for spheres in $\mathbb{R}^n$, $\mathbb{S}^n$, and $\mathbb{H}^n$ as well (see [Mo, \textsection 13.2]).

More recently, mathematicians have studied manifolds with density. A density $f$ is a positive function defined on a manifold. For a region $\Omega$, the weighted volume of $\Omega$ with respect to density $f$ is $$\int_{\Omega}f\,dV,$$ and the weighted area of its boundary is $$\int_{\partial \Omega} f\,dA.$$ The interest in manifolds with density is partially due to its role in Perelman's 2002 proof of the Poincar\'{e} conjecture [Mo, Chapt. 18].

\subsection{Proof of Theorem \ref{weakHyperbolic}}
Two important steps are needed in establishing this result. 

Firstly, we use the Poincar\'{e} ball model of hypebolic space to reduce the problem to the open ball in $\mathbb{R}^n$ with equal volume and perimeter densities which diverge at the boundary.

Secondly, Proposition \ref{noBoundary} shows that a component of an isoperimetric region must be bounded. Otherwise we carefully truncate the region and restore the volume elsewhere to get a good candidate which nonetheless must have no less perimeter. This inequality, paired with other differential inequalities relating the boundary of the truncation to the rate of volume growth, forces the volume outside the ball of radius $r$ to reach 0 in finite time.



\subsection{Paper outline}
Section 2 provides notation. Section 3 begins with an explicit formula (Lemma \ref{tanh}) relating Poincar\'{e} ball and exponential coordinates on $\mathbb{H}^n$. Proposition \ref{equiv} shows that our unequal densities on $\mathbb{H}^n$ are equivalent to equal densities on the Euclidean ball. Propositions \ref{ExistNBall} and \ref{noBoundary} after Morgan-Pratelli [MP, Thms. 3.3, 5.9] establish the existence and boundedness of isoperimetric regions. Now the local analysis of Chambers [Ch] for equal densities on Euclidean space completes the proof of our main Theorem \ref{weakHyperbolic}.

\subsection{Acknowledgements}
We would like to thank the NSF REU grant program, Williams College (including the Finnerty Fund), Stony Brook University and the Mathematical Association of America for funding. Additionally,  we would like to thank our advisor Frank Morgan for valuable input and discussions throughout our research.

\section{Notation}
Here we provide a reference for  the notation used in many of our lemmas and propositions.

     \begin{figure}[h!]\label {figurr}

  \includegraphics[width=.9
\textwidth]{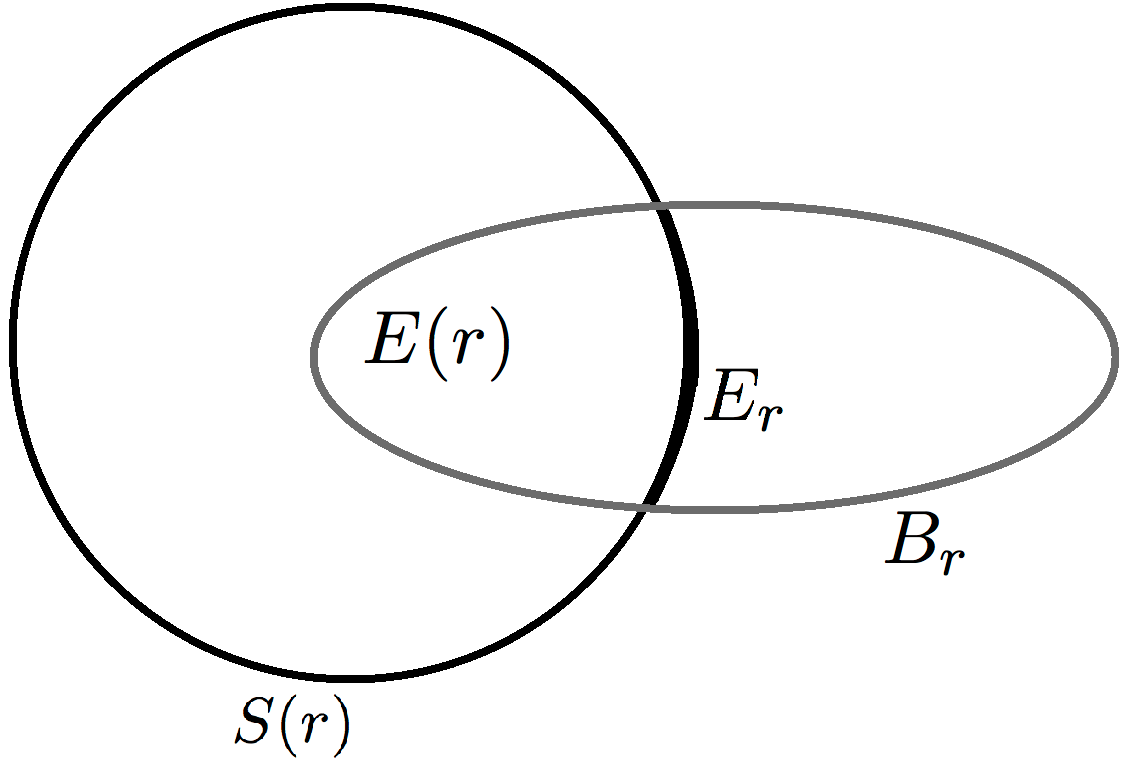}

\caption{By relating the slice $E_r$ of $E$ to the rate of growth of the volume $|E(r)|$ inside the unit sphere, one can obtain inequalities useful in proving existence and boundedness.}
\end{figure}

As in Figure 1, in both hyperbolic and Euclidean space, denote the sphere and ball of radius $r$ by $S(r)$ and $B(r)$. Let $|S|$ denote the unweighted measure of a surface or region $S$. For a region $E$, let $E_r$ be the slice of $E$ by the sphere $S(r)$ (see [Mo, 4.11]). Let $B_r$ be the restriction of the boundary of $E$ to the exterior of the ball $B(r)$ and define $P(r)$ as $|B_r|$. Let $E(r)$ denote the restriction of $E$ to the ball $B(r)$.
 Furthermore, let $V(r)$ be the volume of the restriction of $E$ to the exterior of the ball $B(r)$. Let  $p(r)$ denote the perimeter of $E_r$. In the presence of any density $f$, apply $f$ as a subscript to indicate weighted volume or perimeter.






\section{Proof of Theorem}
After two lemmas, Proposition \ref{equiv} equates hyperbolic space with unequal densities to the Euclidean ball with equal densities. Propositions \ref{ExistNBall} and \ref{noBoundary} prove existence and boundedness. Our main Theorem \ref{weakHyperbolic} now can apply Chambers [Ch] to prove spheres uniquely isoperimetric in $\mathbb{H}^n$ with our unequal densities.

\begin{lemma}\label{tanh}
On the Poincar\'{e} ball model of $\mathbb{H}^n$, distance from the origin is $2\tanh^{-1}{r}$.
\end{lemma}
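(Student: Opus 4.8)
The plan is to use the standard Poincar\'{e} ball model metric and integrate along a radial geodesic. On the Poincar\'{e} ball model of $\mathbb{H}^n$, the metric is
$$
ds^2 = \frac{4\,\abs{dx}^2}{(1-\abs{x}^2)^2},
$$
so along the radial segment from the origin to a point at Euclidean distance $r<1$, the hyperbolic length element is $2\,dt/(1-t^2)$. Since radial segments through the origin are geodesics in this model, the hyperbolic distance from the origin to that point is obtained simply by integrating this element.

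First I would write the distance as
$$
R = \int_0^r \frac{2\,dt}{1-t^2},
$$
and then evaluate the integral. Using the partial fraction decomposition $\frac{2}{1-t^2} = \frac{1}{1-t} + \frac{1}{1+t}$, the antiderivative is $\ln\frac{1+t}{1-t}$, so $R = \ln\frac{1+r}{1-r}$. Finally I would recognize this as $2\tanh^{-1} r$, using the identity $\tanh^{-1} r = \tfrac12 \ln\frac{1+r}{1-r}$, which completes the proof.

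There is no real obstacle here; the only points requiring a word of justification are that radial rays are length-minimizing geodesics in the Poincar\'{e} ball (a standard fact, following from the rotational symmetry of the metric and the observation that any non-radial path can only increase length), and the elementary integral evaluation. One could alternatively quote the distance formula for the Poincar\'{e} ball directly from a standard reference on hyperbolic geometry and specialize to the case where one endpoint is the origin.
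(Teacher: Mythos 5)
Your proposal is correct and follows essentially the same route as the paper: integrate the radial length element $2\,dt/(1-t^2)$ from $0$ to $r$, use partial fractions to get $\ln\frac{1+r}{1-r}$, and identify this with $2\tanh^{-1}r$. The paper's proof is exactly this computation, stated even more tersely (it takes the geodesic property of radial rays for granted), so your added remarks only make the argument more complete.
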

\begin{proof}
This distance may be written as

\begin{align*}
\int_0^{r}\frac{2}{1-x^2}\,dx &= \int_0^{r}\frac{1}{1-x}+\frac{1}{1+x}\,dx \\
&= \ln{\bigg(\frac{1+r}{1-r}\bigg)} \\
&= 2\tanh^{-1}{r}.
\end{align*}
\end{proof}

Thus, a function $\phi:[0, 1)\to(0, \infty)$ can be considered a radial density on hyperbolic space as a function of distance from the origin by computing the composition $\phi\circ\tanh{(r/2)}$. This allows for comparison between densities on hyperbolic space and densities on the Euclidean ball. Lemma \ref{convex} establishes that log convexity of $\phi\circ \tanh{(r/2)}$ on $[0,\infty)$ implies log convexity of $\phi$ on $[0,1)$

 \begin{lemma}\label{convex}

Let $g$ be a smooth function with $g'$ positive and $g''$ negative. If the composition of a smooth positive nondecreasing function $f$ with $g$ is log convex, then $f$ is log convex.
 \end{lemma}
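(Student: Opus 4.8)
The plan is to pass to logarithms. Write $h = \log f$; since $f$ is smooth and positive, $h$ is smooth, and the conclusion ``$f$ is log convex'' is precisely the statement $h'' \geq 0$. Likewise, the hypothesis that $f \circ g$ is log convex says $\big(\log(f\circ g)\big)'' = (h\circ g)'' \geq 0$.

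Next I would expand $(h\circ g)''$ by the chain rule, obtaining
\[
(h\circ g)''(x) = h''(g(x))\,g'(x)^2 + h'(g(x))\,g''(x),
\]
so the hypothesis becomes $h''(g(x))\,g'(x)^2 \geq -\,h'(g(x))\,g''(x)$. Then I would check signs: because $f$ is positive and nondecreasing, $h' = f'/f \geq 0$, and because $g'' < 0$ by hypothesis, $-g''(x) > 0$; hence the right-hand side is $\geq 0$. Since $g' > 0$ gives $g'(x)^2 > 0$, dividing yields $h''(g(x)) \geq 0$ for every $x$. As $x$ varies, $g(x)$ ranges over the domain of $f$ (in the intended application $g = \tanh(\,\cdot\,/2)$ maps $[0,\infty)$ onto $[0,1)$), so $h'' \geq 0$ everywhere, i.e.\ $f$ is log convex.

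The computation itself is routine; the only delicate point — and the reason the monotonicity hypothesis on $f$ is needed — is the sign bookkeeping in the displayed inequality. Differentiating a composition with a concave $g$ produces the extra term $h'(g)\,g''$, whose sign is a priori unclear; it is exactly $f$ nondecreasing (so $h' \geq 0$) together with $g'' < 0$ that forces this term to be nonpositive, which only helps, since it makes $h''(g)\,g'^2$ at least $-h'(g)\,g'' \geq 0$. I do not anticipate any serious obstacle beyond making sure the range of $g$ covers the portion of the domain on which log convexity of $f$ is asserted.
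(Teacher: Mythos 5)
Your proof is correct and is essentially the paper's argument: both expand $(\log(f\circ g))''$ via the chain rule, observe that the term $h'(g)\,g''$ (equivalently $ff'g''$ after multiplying by $f^2$) is nonpositive because $f$ is nondecreasing and $g''<0$, and conclude $(\log f)''\geq 0$ on the range of $g$. The only cosmetic difference is that you work directly with $h=\log f$ while the paper clears denominators by multiplying through by $f^2$.
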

 \begin{proof}

  By assumption,

	$$0 \leq f^2\cdot(\log(f\circ g))''=(g')^2\cdot(ff''-(f')^2)+ff'g''$$
    
    $$\leq(g')^2\cdot(ff''-(f')^2) = (g'\cdot f)^2(\log f)''$$ Therefore $f$ is log convex.

\end{proof}
Take any smooth radial density on hyperbolic space. Let $\phi(r)$ be the density as a function of Poincar\'{e} radial coordinate $r$. Then if the density is log convex, $\phi$ must be nondecreasing and when composed with $\tanh{(r/2)}$ is log convex on $[0, \infty)$. Taking $g$ to be $\tanh{(r/2)}$ provides that $\phi$ is log convex on $[0, 1)$.  

As previously mentioned, the hyperbolic metric in Poincar\'{e} coordinates is a conformal rescaling of that on the Euclidean ball. By the effects of conformally rescaling a metric, we see that hyperbolic space has the same isoperimetric profile in Poincar\'{e} coordinates as the open unit Euclidean ball with with area density $(2/(1-r^2))^{n-1}$ and volume density $(2/(1-r^2))^n$. We are now able to establish the equivalence between hyperbolic space with density and the Euclidean ball with density:

\begin{prop}\label{equiv}
$\mathbb{H}^n$ with radial volume density $\phi(R)$ and perimeter density $$2\phi(R)\cosh^2{(R/2)}$$is equivalent to the open unit Euclidean ball with density $$\phi(2\tanh^{-1}{r})\bigg(\frac{2}{1-r^2}\bigg)^{n},$$
where $R$ is hyperbolic distance from the origin and $r$ is Euclidean distance from the origin.
\end{prop}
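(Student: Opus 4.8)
The plan is to track what happens to the two densities under the change of coordinates between the Poincar\'e ball model and exponential (geodesic polar) coordinates on $\mathbb{H}^n$, and then to combine the volume and perimeter densities on the ball side into a single density by exploiting the conformal factor of the Poincar\'e metric. First I would recall, as already noted in the excerpt, that the Poincar\'e metric $g_{\mathbb{H}} = \big(2/(1-r^2)\big)^2 g_{\mathrm{eucl}}$ is a conformal rescaling of the Euclidean metric on the open unit ball with conformal factor $\lambda(r) = 2/(1-r^2)$; consequently the hyperbolic $n$-dimensional volume element is $\lambda^n$ times the Euclidean one and the hyperbolic $(n-1)$-dimensional area element on any hypersurface is $\lambda^{n-1}$ times the Euclidean one. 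This means that $\mathbb{H}^n$ with unit densities is the same isoperimetric problem as the Euclidean ball with volume density $\lambda^n$ and perimeter density $\lambda^{n-1}$, as stated before the Proposition.

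Next I would install the hyperbolic densities $\phi(R)$ on volume and $2\phi(R)\cosh^2(R/2)$ on perimeter. By Lemma \ref{tanh}, $R = 2\tanh^{-1} r$, so as a function of the Euclidean radius $r$ the volume density becomes $\phi(2\tanh^{-1} r)$ and the perimeter density becomes $2\phi(2\tanh^{-1} r)\cosh^2(\tanh^{-1} r)$. Combining with the conformal factors from the previous step, the Euclidean ball carries volume density $\phi(2\tanh^{-1} r)\,\lambda(r)^n$ and perimeter density $2\phi(2\tanh^{-1} r)\cosh^2(\tanh^{-1} r)\,\lambda(r)^{n-1}$. The crux is then the algebraic identity
$$2\cosh^2\!\big(\tanh^{-1} r\big)\,\lambda(r)^{n-1} = \lambda(r)^{n},$$
equivalently $2\cosh^2(\tanh^{-1} r) = 2/(1-r^2)$. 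This follows from $\cosh^2 t = 1/(1-\tanh^2 t)$ with $t = \tanh^{-1} r$, so $\cosh^2(\tanh^{-1} r) = 1/(1-r^2)$ and hence $2\cosh^2(\tanh^{-1} r) = 2/(1-r^2) = \lambda(r)$. Therefore the perimeter density on the ball equals $\phi(2\tanh^{-1} r)\,\lambda(r)^n$, exactly the same expression as the volume density. This collapses the problem to the Euclidean ball with a single density $\phi(2\tanh^{-1} r)\big(2/(1-r^2)\big)^n$, which is the claim.

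The only genuine subtlety — really the "obstacle," though it is mild — is making precise the assertion that an isoperimetric problem is unchanged when volume and perimeter densities are multiplied by a common positive factor only if that factor is literally the same for both; here the point is that a conformal factor $\lambda$ contributes $\lambda^n$ to volume but only $\lambda^{n-1}$ to perimeter, so it is the extra factor of $\cosh^2(R/2)$ (not an arbitrary one) in the hyperbolic perimeter density that is engineered precisely to absorb the missing power of $\lambda$. I would state this equivalence of weighted isoperimetric problems as the preservation of the ratio of weighted perimeter to weighted volume under the diffeomorphism, so that minimizers correspond to minimizers and spheres about the origin on one side match spheres about the origin on the other; the rest is the one-line hyperbolic-trig identity above.
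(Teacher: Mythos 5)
Your proposal is correct and follows essentially the same route as the paper's proof: both convert $2\cosh^2(R/2)$ to $2/(1-r^2)$ via the identity $\cosh^2 t = 1/(1-\tanh^2 t)$ and Lemma \ref{tanh}, then observe that this extra factor exactly supplies the missing power of the conformal factor so that the weighted volume and area elements on the Euclidean ball coincide. Your closing remark about why the factor must be $\cosh^2(R/2)$ specifically is a nice gloss on the same computation, not a different argument.
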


\begin{proof}
Note that $\cosh^2(R/2)=1/(1-\tanh^2(R/2))$. So, the perimeter density may be written as
$$\phi(R)\cdot\frac{2}{1-\tanh^2{(R/2)}}.$$ By Lemma \ref{tanh}, we may rewrite these functions in Poincar\'{e} coordinates to see that the volume density becomes $\phi(2\tanh^{-1}{r})$ and the perimeter density becomes $$\phi(2\tanh^{-1}{r})\cdot\frac{2}{1-r^2},$$ where $r$ is the radial Poincar\'{e} coordinate. 
Finally, let $dV_0$ and $dA_0$ denote the Euclidean volume and area elements. Then in Poincar\'{e} coordinates, the weighted volume element is $$\phi(2\tanh^{-1}{r})\cdot\bigg(\frac{2}{1-r^2}\bigg)^ndV_0$$ and the weighted area element is $$\phi(2\tanh^{-1}{r})\cdot\bigg(\frac{2}{1-r^2}\bigg)^ndA_0.$$ These volume and area elements are equivalent to those on the described weighted Euclidean ball.

\end{proof}

Having demonstrated this equivalence between $\mathbb{H}^n$ and the unit ball with certain densities, we are now ready to make statements about existence and boundedness of isoperimetric regions. First, we need a brief lemma relating area of slices of a region to the perimeter of its boundary.




\begin{lemma}\label{projection}
Let $E$ be a region in either $\mathbb{H}^n$ or the unit ball in $\mathbb{R}^n$ with continuous, nondecreasing radial volume density, with infinite weighted radial distance to the boundary in the ball case. If $E$ has finite weighted volume then for any $r$,
$$P(r)\geq |E_r|.$$

\end{lemma}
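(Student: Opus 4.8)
## Proof Proposal

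The plan is to compare the boundary portion $B_r$ (lying outside $B(r)$) with the slice $E_r$ (lying on the sphere $S(r)$) by a projection argument. Observe that $E_r$ bounds, together with $B_r$, the region $E$ restricted to the exterior of $B(r)$; more precisely, the boundary of $E \setminus B(r)$ decomposes (up to measure-zero sets) into $B_r$ together with $E_r$. The key geometric fact is that radial projection toward the origin — or, in the hyperbolic case, projection along geodesics through the origin — is distance-nonincreasing on the complement of $B(r)$ when applied to the sphere $S(r)$ as target, because spheres about the origin are the level sets of the radial function and the metric only ``shrinks'' under this nearest-point-type retraction onto $S(r)$.

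First I would set up the unweighted comparison $|B_r| \ge |E_r|$: the nearest-point projection $\pi$ from $\{R \ge r\}$ onto the sphere $S(r)$ is $1$-Lipschitz in both geometries (in $\mathbb{H}^n$ this is the statement that the radial geodesic flow is expanding away from the origin, hence contracting toward it; in the Euclidean ball it is ordinary radial projection $x \mapsto r x/|x|$, which is $1$-Lipschitz outside $B(r)$). Applying $\pi$ to $B_r$ yields a subset of $S(r)$ whose image covers $E_r$, because any geodesic ray from the origin that enters the slice $E_r$ and continues outward must exit the finite-weighted-volume region $E$ and therefore crosses $\partial E$ outside $B(r)$, i.e. meets $B_r$. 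The finiteness of weighted volume (and, in the ball case, the infinite weighted distance to the boundary) is exactly what guarantees that $E$ cannot ``escape to the boundary'' along such a ray without its boundary intersecting $B_r$ — this prevents the degenerate situation where $E_r$ has large measure but $B_r$ is empty. Then $|E_r| \le |\pi(B_r)| \le |B_r| = P(r)$.

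Next I would upgrade this to the weighted statement $P(r) = |B_r|_f \ge |E_r|_f$. Here the radial monotonicity of the density is what we need: under the projection $\pi$, every point of $B_r$ has radial coordinate $R \ge r$ and is sent to a point of radial coordinate exactly $r$, so since $f$ is nondecreasing in the radial variable, the weighted area can only decrease under $\pi$, i.e. $|\pi(B_r)|_f \le |B_r|_f$ where on the left $f$ is evaluated at radius $r$. Combining with $E_r \subseteq \pi(B_r)$ and again using that on $S(r)$ the density is the constant $f(r)$, we get $|E_r|_f = f(r)\,|E_r| \le f(r)\,|\pi(B_r)| \le |B_r|_f = P(r)$.

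The main obstacle is the covering claim $E_r \subseteq \pi(B_r)$ up to measure zero — i.e. justifying that almost every radial ray meeting the slice must subsequently cross the boundary of $E$ outside $B(r)$. This is where the finite-weighted-volume hypothesis does real work: along a ray that never crosses $\partial E$ again, $E$ contains the entire outward ray, and because the weighted length of such a ray to the boundary is infinite (in the ball case, by hypothesis; in $\mathbb{H}^n$ automatically), a positive-measure family of such rays would force infinite weighted volume, a contradiction. Making this rigorous requires a coarea/slicing argument (using the slicing results of [Mo, 4.11] already cited) together with the fact that $E$ is a set of finite perimeter so that its essential boundary and the topological boundary agree almost everywhere along almost every ray. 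I would carry out this measure-theoretic step carefully and treat the Lipschitz estimates on $\pi$ as routine.
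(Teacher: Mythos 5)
Your proposal is correct and follows essentially the same route as the paper: radial projection onto $S(r)$ is area nonincreasing, and $\pi(B_r)$ must cover $E_r$ up to measure zero, since otherwise a positive-measure family of outward radial rays would lie entirely in $E$ and (using the nondecreasing density and, in the ball case, the infinite weighted radial distance) contribute infinite weighted volume. The extra weighted version $P_f(r)\geq |E_r|_f$ you derive is not part of the stated lemma but is the form the paper later uses, so including it does no harm.
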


\begin{proof}
 Since the radial projection $\pi_r$ of the exterior onto $S_r$ is area nonincreasing, it suffices to show that $\pi_r(B_r)$ covers $E_r$, up to set of measure zero. If not, then a set of positive measure in $E_r$ is not in the projection. The  product of this set with either $(r, \infty)$ or $(r, 1)$, depending on the space, is contained in $E$. But this set must have infinite weighted volume, violating our assumption of finite weighted volume.
\end{proof}

Now we may show the existence of isoperimetric regions in our space. Note that this proof follows a similar structure of an existence proof in Morgan-Pratelli [MP, Thm. 3.3].

\begin{prop}\label{ExistNBall}
Isoperimetric regions exist in $\mathbb{H}^n$ with nondecreasing, continuous radial volume density $g(R)$ and radial perimeter density $g(R)\cdot 2\cosh^2(R/2)$, where $R$ is distance from the origin, for all volumes.
\end{prop}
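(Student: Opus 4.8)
The strategy is the standard direct method in the calculus of variations, adapted via the equivalence established in Proposition \ref{equiv}. First I would pass to the Poincar\'{e} ball model, so that the problem becomes: minimize weighted perimeter among subsets of the open unit Euclidean ball of fixed weighted volume, where the volume and perimeter densities now coincide and equal $h(r) := g(2\tanh^{-1} r)\,(2/(1-r^2))^n$. The key features of $h$ are that it is continuous and positive on $[0,1)$, it blows up at $r\to 1$, and crucially the weighted radial distance to the boundary $\int_0^1 h\,dr$ is infinite (indeed $h$ grows at least like $(1-r)^{-n}$), so Lemma \ref{projection} applies.

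Now take a minimizing sequence $E_k$ of sets of finite perimeter with $|E_k|_h = v$ and $P_h(E_k) \to I(v)$, the infimal perimeter. Since weighted perimeter dominates a fixed multiple of Euclidean perimeter on every compact ball $\overline{B(r)}$, the sets $E_k \cap B(r)$ have uniformly bounded Euclidean perimeter there; by the compactness theorem for BV functions we extract (diagonalizing over $r \uparrow 1$) a subsequence converging in $L^1_{loc}$ to a set $E$ of locally finite perimeter, and by lower semicontinuity of weighted perimeter $P_h(E) \le \liminf P_h(E_k) = I(v)$ on each ball, hence $P_h(E) \le I(v)$. The one genuine difficulty is the possible loss of volume to the boundary of the ball (equivalently, loss of volume ``to infinity'' in $\mathbb{H}^n$): a priori $|E|_h \le v$ but the inequality could be strict. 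This is exactly the obstacle handled in Morgan--Pratelli [MP, Thm. 3.3], and I would argue as follows. Let $m = v - |E|_h \ge 0$ be the lost volume. Using Lemma \ref{projection}, for each $r$ close to $1$ the slice perimeter satisfies $P(r) \ge |E_r|$; combined with the coarea-type identity $\frac{d}{dr} V_h(r) = -p_h(r)$ (where $V_h(r)$ is the weighted volume of $E_k$ outside $B(r)$) and the isoperimetric inequality on the sphere, one derives a differential inequality forcing that a chunk of weighted volume $m$ sitting near the boundary carries weighted perimeter bounded below by the weighted perimeter of the corresponding spherical shell's inner boundary — and because the densities are equal and the radial weight diverges, the ``cost per unit volume'' near the boundary does not go to zero. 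More precisely, one shows $I(v) \ge I(v-m) + (\text{perimeter cost of recovering } m \text{ near } \partial B)$, and since recovering volume $m$ by adding a thin spherical shell near radius $r$ costs perimeter roughly $c\, h(r)\, r^{n-1}$ while the shell's volume is comparable, taking $r \to 1$ this cost is bounded below (it does not vanish), which contradicts strict subadditivity unless $m = 0$. Hence $|E|_h = v$, $E$ has finite weighted volume, so by Lemma \ref{projection} $E$ is in fact essentially bounded away from $\partial B$ enough to be a legitimate competitor, and $P_h(E) = I(v)$: $E$ is isoperimetric.

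**Main obstacle.** The serious point is ruling out escape of volume to the ball's boundary, i.e. showing $m=0$. Everything else — conformal rescaling, BV compactness on compact subballs, lower semicontinuity — is routine. The no-loss argument leans on two inputs already available: Lemma \ref{projection} (which itself uses the infinite weighted radial distance, the reason we need the perimeter density inflated by $2\cosh^2(R/2)$ rather than just $g$), and the monotonicity of the density, which guarantees that truncating $E_k$ at radius $r$ and reinserting the removed volume as an outer shell never decreases perimeter by more than a controlled amount. I would organize the write-up so that this truncation-and-restoration estimate is stated cleanly, since essentially the same mechanism reappears — in sharper form — in Proposition \ref{noBoundary} to prove boundedness.
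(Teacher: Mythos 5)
Your overall architecture --- reduce to the unit ball via Proposition \ref{equiv}, run the direct method with BV compactness and lower semicontinuity, then rule out loss of volume to the boundary using Lemma \ref{projection} and the divergence of the density --- is the same as the paper's. The gap is in how you close the no-loss step. You assert that the weighted perimeter carried by the escaping volume $m$ is ``bounded below (it does not vanish)'' and that this ``contradicts strict subadditivity unless $m=0$.'' A positive lower bound on that cost contradicts nothing by itself: it would only yield $I(v)\ge I(v-m)+C$ for some $C>0$, and to get a contradiction from that you would still need to exhibit a competitor of volume $v$ with perimeter strictly less than $I(v-m)+C$, i.e.\ show that the lost volume can be restored in the interior more cheaply than the boundary cost $C$. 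You neither carry out such a comparison nor have any subadditivity statement for the isoperimetric profile available at this stage.

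What actually closes the argument --- and what the paper proves --- is stronger: the perimeter cost of retaining weighted volume $\epsilon$ beyond radius $R_0$ is not merely bounded below but blows up as $R_0\to 1$. Concretely, with $M=\sup_{r\ge R_0}|E_r|$, Lemma \ref{projection} gives $|\partial E|_f\ge Mf(R_0)$; chaining the spherical isoperimetric inequality $p(r)\ge c|E_r|^{(n-2)/(n-1)}\ge c\,|E_r|\,M^{-1/(n-1)}$ with the coarea inequality $f(r)p(r)\le -P_f'(r)$ and the lower bound $\int_{R_0}^{1}|E_r|f(r)\,dr\ge\epsilon$ yields $|\partial E|_f^{n/(n-1)}\ge c\,\epsilon\,f(R_0)^{1/(n-1)}$. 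Since $f(R_0)\to\infty$ while the perimeters along the minimizing sequence are bounded, this is an outright contradiction, with no competitor construction or subadditivity needed. Your sketch already contains the right ingredients (the slice inequality from Lemma \ref{projection}, the coarea identity, the divergence of the density), so the fix is to push the estimate through to an unbounded lower bound rather than stopping at ``does not vanish.''
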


\begin{proof}
By Proposition \ref{equiv}, it is enough to show that isoperimetric regions exist on the open unit Euclidean ball with radial density $$f(r):=g(2\tanh^{-1}{r})\cdot\bigg(\frac{2}{1-r^2}\bigg)^n.$$
Consider a sequence of regions of the prescribed volume with perimeter tending to the infimum. By compactness [Mo, \textsection 9.1] we may assume convergence of a subsequence to a perimeter-minimizing region. 

The difficulty is that the enclosed volume may be strictly less than the prescribed volume, that some volume disappears to infinity. In that case, for some $\epsilon > 0$, for all $r$, for a tail of the sequence, the volume outside the ball of radius $r$ about the origin is at least $\epsilon$. Fix $R_0$ and such a region $E$, determined by $R_0$, in the tail of the sequence. Using the notation established in section 1, we have

$$ \int_{R_0}^{1}|E_r|f(r)\,dr  \geq \epsilon.$$

Because $f$ satisfies the conditions of Lemma \ref{projection}, we know $|E_r|$ converges to zero as $r$ tends to 1, as it is bounded above by $P(r)$ which must also converge to 0. Hence, when $r$ is chosen close enough to 1, $|E_r|$ is no larger than half the unweighted surface area of the sphere of radius $r$. When this condition holds, we may apply the standard isoperimetric inequality, $$p(r) \geq c|E_r| ^{\frac{n-2}{n-1}},$$ where $c$ is some dimensional constant.

Let $M$ be the supremum of $|E_r|$ for $r\geq R_0$, which is finite by Lemma \ref{projection}. In addition, Lemma \ref{projection} implies that $P_f(R_0)$ satisfies

\begin{equation}\label{cheese}
|\partial E|_f \geq Mf(R_0)
\end{equation} 
where we have used that $f$ is nondecreasing.

Take note that the function $P(r)$ is nonincreasing, so it must be differentiable almost everywhere, and by extension, $P_f(r)$ is diffferentiable almost everywhere. By standard slicing theory [Mo, \textsection4.11], for almost all $r$,

	$$f(r)p(r) \leq -P_f'(r).$$ Since any singular changes in $P_f$ are positive, integration yields that for $R$ large enough

$$|\partial E|_f\geq\int_R^1-P_f'(r)\,dr \geq\int_R^{1} p(r)f(r)dr$$ $$\geq c \int_R^{1}|E_r| ^{\frac{n-2}{n-1}}f(r)dr \geq \frac{c}{M^{\frac{1}{n-1}}}\int_R^{1}|E_r| f(r)dr$$ $$\geq \frac{c}{M^{\frac{1}{n-1}}}\epsilon. $$

By \eqref{cheese},

\begin{equation}\label{egg}
|\partial E|_f^{\frac{n}{n-1}}\geq c f(R_0)^{\frac{1}{n-1}}\epsilon.
\end{equation}
Note that because the perimeters of the regions in the sequence converge, the sequence of perimeters is bounded by some positive $k$. Because $E$ is one of the sequence elements, \eqref{egg} provides that

\begin{equation}
\frac{\epsilon k^{\frac{n}{n-1}}}{c}\geq f(R_0)^{\frac{1}{n-1}}.
\end{equation} This implies that $f$ is bounded above. Recalling the definition of $f$ and noting that $g$ is bounded below provides that $f$ must be unbounded, a contradiction.

Therefore there is no loss of volume to infinity and the limit provides the desired perimeter-minimizing region.

\end{proof}
Proposition \ref{noBoundary} establishes that these isoperimetric regions must be bounded.  See Morgan-Pratelli [MP, Thm. 5.9] for a similar proof in $\mathbb{R}^n$.

\begin{prop}\label{noBoundary}
Isoperimetric regions are bounded in $\mathbb{H}^n$ with nondecreasing, continuous radial volume density $g(R)$ and radial perimeter density $g(R)\cdot 2\cosh^2(R/2)$, where $R$ is distance from the origin.

\end{prop}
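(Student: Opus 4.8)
The plan is to reduce once more to the Euclidean ball via Proposition \ref{equiv}, and then run a truncation argument in the spirit of Morgan--Pratelli [MP, Thm. 5.9], but adapted to a density $f(r)$ which \emph{diverges} as $r\to 1$. Let $E$ be an isoperimetric region for some fixed volume; by Proposition \ref{ExistNBall} it exists, and by Proposition \ref{equiv} we may assume $E$ lies in the open unit ball with the single radial density $f(r)=g(2\tanh^{-1} r)\,(2/(1-r^2))^n$, which is positive, continuous, nondecreasing, and tends to $+\infty$ as $r\to 1$. Suppose for contradiction that $E$ is unbounded, i.e.\ $V(r)>0$ for all $r<1$. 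Using the notation of Section 2, for a.e.\ $r$ one has the slicing inequality $V'(r)=-|E_r|\,f(r)$ (up to sign conventions), and removing the part of $E$ outside $B(r)$ changes the weighted perimeter by at most $P(r)$ replaced by $p(r)$; isoperimetry of $E$ (after restoring the lost volume $V(r)$ near the boundary of $E$, which costs weighted perimeter controlled by $V(r)$ times a bounded factor since $f$ is comparable to its values on a fixed compact set once we know $E$ is bounded away from... ) — more carefully, comparing $E$ with the truncation $E(r)$ plus a small volume-restoring perturbation yields a differential inequality of the form
\begin{equation}\label{diffineq}
p(r)\,f(r)\le P(r)\,f(r)\quad\text{is automatic, so instead one gets}\quad c\,V(r)^{(n-2)/(n-1)}\,f(r)\lesssim -\frac{d}{dr}\big(\text{weighted perimeter outside }B(r)\big).
\end{equation}

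The precise mechanism I would use is the following. Let $A(r)=P_f(r)$ be the weighted perimeter of $\partial E$ outside $B(r)$. Comparing $E$ with $E(r)$ (whose boundary outside $B(r)$ is replaced by the slice $E_r$) and then restoring the volume $V_f(r)$ lost, the isoperimetric property of $E$ forces
$$A(r)\ \le\ p_f(r)\ +\ (\text{cost of adding back volume }V_f(r)).$$
Since spheres about the origin are efficient, adding back a small weighted volume $v$ near radius $r$ costs at most roughly $C\,v^{(n-2)/(n-1)}$ in weighted perimeter for $r$ bounded away from $0$; combined with the weighted isoperimetric inequality on slices $p_f(r)\ge c\,|E_r|^{(n-2)/(n-1)} f(r)$ and the identity $-V_f'(r)=|E_r|f(r)$, one obtains, for $r$ near $1$ where $|E_r|$ is small (Lemma \ref{projection} again), a closed differential inequality purely in terms of $V(r)$ (the \emph{unweighted} exterior volume is not what appears; it will be cleanest to work with $V_f$, the weighted exterior volume, or with the unweighted $V$ using that $f$ diverges). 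The key point is that the divergence of $f$ near $r=1$ \emph{helps}: it makes the right-hand side of the differential inequality large relative to $V(r)$, so that $V$ must hit $0$ at some finite radius $r_0<1$, contradicting unboundedness.

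Concretely, I expect an inequality of the shape $-V'(r)\ge c\,f(r)^{1/(n-1)}\,V(r)^{(n-2)/(n-1)}$ for $r$ close to $1$. Since $f(r)\to\infty$, the quantity $\int^1 f(r)^{1/(n-1)}\,dr$ diverges, while $\int_{V(r)}^{V(r_1)} v^{-(n-2)/(n-1)}\,dv$ stays finite as $V(r)\to 0$; integrating the differential inequality from $r$ to $1$ then gives $(n-1)\big(V(r_1)^{1/(n-1)}-V(r)^{1/(n-1)}\big)\ge c\int_{r_1}^{1} f^{1/(n-1)}=\infty$ unless $V$ already vanished, i.e.\ $V(r_0)=0$ for some $r_0<1$. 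This says $E$ is contained in $B(r_0)$, hence bounded, the desired contradiction. The remaining input is standard regularity: an isoperimetric region has smooth boundary away from a small singular set [Mo, \textsection 8], so the slicing identities and the comparison perturbations are legitimate for a.e.\ $r$, and the singular part of $P_f$ only helps (it is nonincreasing, so any jumps have the right sign).

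The main obstacle I anticipate is making the "restore the lost volume" step quantitative with the right exponent and with constants uniform in $r$ near $1$ — one must verify that putting back a small weighted volume near the sphere $S(r)$ (say as a thin annular bump, or as a small geodesic ball pushed near the origin where $f$ is smallest) costs weighted perimeter $\lesssim V_f(r)^{(n-2)/(n-1)}$ (or even just $\lesssim V_f(r)\cdot(\text{bounded})$, which is weaker but may suffice since $V_f(r)\to 0$), and that this cost is dominated by the gain coming from the divergence of $f$. Getting these two competing quantities on the correct side of the inequality, with explicit enough dependence on $n$ and on $f$ near the boundary, is the delicate part; everything else is slicing theory and an ODE comparison. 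Once \eqref{diffineq}-type inequality is in hand in the form $-\tfrac{d}{dr}\big(V(r)^{1/(n-1)}\big)\ge c\,f(r)^{1/(n-1)}$, finite-time extinction of $V$ is immediate from the non-integrability of $f^{1/(n-1)}$ at $r=1$.
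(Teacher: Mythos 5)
Your overall strategy is the paper's: truncate $E$ at radius $r$, restore the lost volume at controlled perimeter cost, and combine with slicing and a spherical isoperimetric inequality to force the exterior volume to vanish in finite time. (The paper runs this directly in $\mathbb{H}^n$ with the two densities $g$ and $f=2g\cosh^2(R/2)$ rather than in the ball, and bounds the restoration cost linearly as $\epsilon(H+1)$, where $H$ is the constant generalized mean curvature of $\partial E$ and the volume is added inside a fixed ball $B(R_1)$ near the origin --- the fallback you mention; those are inessential differences.)

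The genuine gap is in the middle, where you try to pass from the slice isoperimetric inequality to a closed ODE for $V$. Two problems. First, your comparison inequality $A(r)\le p_f(r)+(\text{cost})$ is dimensionally wrong: the truncated region's new boundary piece is the slice $E_r$ itself, so the correct statement is $P_f(r)\le |E_r|_f+(\text{cost})$, with the weighted $(n-1)$-dimensional area of the slice, not its $(n-2)$-dimensional perimeter $p_f(r)$. Since $|E_r|_f=-V_f'(r)$, this alone gives $P_f(r)\le -V_f'(r)+(\text{cost})$; to close the loop one needs a \emph{lower} bound on $P_f(r)$ in terms of $V(r)$, and that is the step you never supply. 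The paper obtains it by combining $p_f(r)\ge c\,|E_r|_f\,P_f(r)^{-1/(n-1)}\cosh^{2/(n-1)}(r/2)$ (from the slice isoperimetric inequality and $|E_r|_f\le P_f(r)$) with the slicing inequalities $-P_f'\ge p_f$ and $-V_f'=|E_r|_f$ to get $-(P_f^{n/(n-1)})'(r)\ge -c\,V_g'(r)\cosh^{2n/(n-1)}(r/2)$, and then \emph{integrating from $r$ to $\infty$} (using monotonicity of $\cosh$ and the favorable sign of the singular part of $P_f$) to reach $P_f(r)\ge c\cosh^2(r/2)\,V_g(r)^{(n-1)/n}$. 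Plugging that into the truncation comparison and absorbing the linear restoration cost yields $-(V_g^{1/n})'(R)\ge c$ for all large $R$, which is the contradiction. Your "expected" inequality $-V'\ge c\,f^{1/(n-1)}V^{(n-2)/(n-1)}$ has the wrong exponents precisely because it skips this integration step: the exponent on $V$ must be the ambient isoperimetric exponent $(n-1)/n$, not the slice exponent $(n-2)/(n-1)$. The finite-time extinction mechanism you describe would still fire with the corrected exponents, but as written the central differential inequality is neither derived nor correctly stated.
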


\begin{proof}

Denote the volume and perimeter densities by $g$ and $f$.

Suppose we have such an isoperimetric region $E$ which is not contained in any sphere about the origin. We use the notation established at the beginning of this section throughout the proof.

Because the volume density is continuous and nondecreasing, and because $E$ encloses finite weighted volume, Lemma \ref{projection} implies that,  

\begin{equation*}
|E_r|\leq P(r).
\end{equation*} Because the density is nondecreasing,

\begin{equation*}
|E_r|_f\leq P_f(r).
\end{equation*}The standard isoperimetric inequality on the sphere tells us that for any $r$ for which $|E_r|$ is at most half of the area of $S(r)$, one has
\begin{equation}\label{ineq4}
p(r)\geq c|E_r|^{\frac{n-2}{n-1}},
\end{equation} where $c$ is some positive dimensional constant. Because $P(r)$ is bounded by $|\partial E|$, which is finite because $|\partial E|_f$ is finite and $f$ is bounded below, there exists an $R_0>0$ for which $r\geq R_0$ implies $|E_r|$ is at most half of the area of $S(r)$. That is, \eqref{ineq4} holds for all $r$ large enough.
 Because the $f$ density on $E_r$ is constant, we may multiply inequality \eqref{ineq4} by $f(r)=\phi(r)\cdot2\cosh^2{(r/2)}$ to obtain
\begin{equation*}
p_f(r)\geq c|E_r|_f^{\frac{n-2}{n-1}}\phi(r)^{\frac{1}{n-1}}\cosh^{\frac{2}{n-1}}{(r/2)}
\end{equation*}

for $r$ large enough, and where we absorb the power of 2 into constant $c$. Noting that $\phi$ is bounded below by some constant we may absorb this lower bound into a new constant $c$ to obtain

\begin{align}
p_f(r)&\geq c|E_r|_f^{\frac{n-2}{n-1}}\cosh^{\frac{2}{n-1}}{(r/2)},\nonumber \\
&\geq c|E_r|_f\cdot |E_r|_f^{\frac{-1}{n-1}}\cosh^{\frac{2}{n-1}}{(r/2)},\nonumber \\
\label{toUse}
&\geq c|E_r|_f\cdot P_f(r)^{\frac{-1}{n-1}}\cosh^{\frac{2}{n-1}}{(r/2)}.
\end{align}

Note that we have taken negative powers of $|E_r|$, which may be zero if $E$ is disconnected, however for such values of $r$, every quanity in the inequality is zero and so the statements hold trivially. Furthermore, we are justified in taking a negative power of $P_f(R)$ as it is never zero, because $E$ is unbounded.
Because $V(r)$, $P(r)$, and $f$ are monotonic functions, $V_f(r)$ and $P_f(r)$ are differentiable almost everywhere. By Morgan [Mo, \textsection 4.11], \eqref{monkey} and \eqref{ape} hold almost everywhere.

\begin{equation}\label{monkey}
-P_f'(r)\geq p_f(r)
\end{equation} and

\begin{equation}\label{ape}
-2\cosh^2{(r/2)}V_g'(r)=-V_f'(r)=|E_r|_f.
\end{equation}

Hence, for almost every $r$ large enough, \eqref{toUse} may be rewritten using \eqref{monkey} and \eqref{ape} as

\begin{align*}
-P_f'(r)&\geq c(-V_f'(r))P_f(r)^{\frac{-1}{n-1}}\cosh^{\frac{2}{n-1}}{(r/2)},\\
-P_f'(r)P_f(r)^{\frac{1}{n-1}}&\geq -2\cdot cV_g'(r)\cosh^{2\frac{n}{n-1}}{(r/2)},\\
-(P_f^{\frac{n}{n-1}})'(r)&\geq -cV_g'(r)\cosh^{2\frac{n}{n-1}}{(r/2)}
 \end{align*} where $c$ may change from line to line. Because this inequality holds for almost all $r$ large enough, we may perform an integration from any choice of $r$ large enough to $\infty$. The quantity $-(P_f^{\frac{n}{n-1}})(r)$ may not be continuous, but it is increasing, so the integral of this quantity is no larger than the absolute change in $P_f^{\frac{n}{n-1}}$. Knowing that $V(r)$ is continuous provides that the integral of the derivative is exactly the change in $V(r)$. Finally, knowing that $P_f(r)$ and $V_g(r)$ converge to 0 at $\infty$, we may perform this integration from large enough $r$ to $\infty$ and write
\begin{align}
P_f^{\frac{n}{n-1}}(r)&\geq\int_r^{\infty}-(P_f^{\frac{n}{n-1}})'(x)\,dx, \nonumber \\
P_f^{\frac{n}{n-1}}(r)&\geq c\cosh^{2\frac{n}{n-1}}{(r/2)}\int_r^{\infty}-V_g'(x)\,dx,\nonumber \\
P_f(r)&\geq c\cosh^2{(r/2)}(V_g)^{\frac{n-1}{n}}\label{SUP}
\end{align} where the positive constant $c$ may be modified from line to line.

The goal in the remainder of the proof is to derive a contradiction by shifting small pieces of volume of $E$ from far away to places closer to the center. We would like to do this in a way that does not increase the perimeter of the isoperimetric region too dramatically.

On this note, there exists an $R_1$ such that for small positive $\epsilon$ there is an inflated verson of $E$, $E_\epsilon$, that agrees with $E$ outside $B(R_1)$ and has $\epsilon$ more weighted volume satisfying
$$|\partial E_{\epsilon}|_f\leq |\partial E|_f+\epsilon(H+1),$$ where $H$ denotes the constant generalized unaveraged mean curvature of $\partial E$ with respect to densities $f$ and $g$. To understand why this property of $E_{\epsilon}$ may hold, recall that $H$ is the derivative of $f$-weighted perimeter with respect to increases of $g$-weighted volume. So, there is a small number $\bar{\epsilon}$ such that $\epsilon\leq\bar{\epsilon}$ implies 
\begin{equation*}
\Delta P\leq\epsilon(H+1)
\end{equation*}
where $\Delta P$ is understood as the increase in $f$ weighted perimeter of $E$ from adding $g$ weighted volume $\epsilon$  to $E$ somewhere relatively close to the origin. Because $V(r)$ and $g$ are continuous functions, the intermediate value theorem gives the existence of $R_1$ such that $V_g(R_1)=\bar{\epsilon}$.
%

Let $\bar{E}$ denote the restriction of $E_{\epsilon}$ to the ball $B(R)$. Then  $|\bar{E}|_g=|E|_g$. Fixing $R$ large enough, we have by \eqref{SUP},

\begin{align}
|\partial \bar{E}|_f&=|\partial E_{\epsilon}|_f-P_f(R)+|E_R|_f\label{yas}\\
&\leq |\partial E|_f+\epsilon(H+1)-c\cosh^2{(R/2)}\epsilon^{\frac{n-1}{n}}\nonumber\\
&+|E_R|_f.\nonumber
\end{align}

Since $E$ is an isoperimetric region, $|\partial\bar{E}|_f\geq |\partial E|_f$, as they enclose the same volume, and \eqref{yas} becomes
\begin{equation*}
0\leq \epsilon(H+1)-c\cosh^2{(R/2)}\epsilon^{\frac{n-1}{n}}+|E_R|_f.
\end{equation*} Recall that $\epsilon$ is determined by $R$ and can be made arbitrarily small by making $R$ arbitrarily large. For $\epsilon$ very small, the linear term is dwarfed by the power term. Hence, for any $R$ large enough,
\begin{align*}
c\cosh^2{(R/2)}\epsilon^{\frac{n-1}{n}}&\leq |E_R|_f=-2\cosh^2{(R/2)}V_g'(R),\\
c(V_g(R))^{\frac{n-1}{n}}&\leq-V_g'(R),\\
c&\leq-(V_g^{\frac{1}{n}})'(R).\\
\end{align*} This last estimate gives a contradiction with the assumption $V_g(r)>0$ for all $r$, and the proof is complete.

\end{proof}

Now that we know by Propositions \ref{ExistNBall} and \ref{noBoundary} that isoperimetric regions exist and are bounded, we can apply the local analysis in Chambers to conclude they are balls about the origin.

\begin{theorem}\label{weakHyperbolic}

Consider $\mathbb{H}^n$ with smooth, radial, log convex volume density $\phi(R)$ and perimeter density $$\phi(R) \cdot2\cosh^2(R/2),$$
where $R$ is distance from the origin. Then spheres about the origin are uniquely isoperimetric.

\end{theorem}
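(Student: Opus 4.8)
The plan is to assemble the ingredients already in place, the only genuine new content being (i) the remark that the density transplanted to the Euclidean ball by Proposition~\ref{equiv} is again log convex, and (ii) the observation that the part of Chambers' argument we need survives passing from $\mathbb{R}^n$ to the open ball.

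First I would make the reduction of Section~3 rigorous in the present hypotheses. Since $\phi(R)$ is smooth, radial and log convex, its pull-back to the coordinate line is even, so its derivative vanishes at $R=0$; together with convexity of $\log\phi$ this gives $(\log\phi)'\ge 0$, i.e.\ $\phi$ is nondecreasing. Hence the discussion following Lemma~\ref{convex} applies (with $g=\tanh(\cdot/2)$, which has positive first and negative second derivative), and the density written in Poincar\'e coordinates, $\tilde\phi(r):=\phi(2\tanh^{-1}r)$, is log convex on $[0,1)$. By Proposition~\ref{equiv} the isoperimetric problem in $\mathbb{H}^n$ with our volume and perimeter densities is equivalent to the open unit Euclidean ball carrying the single radial density
$$F(r)=\tilde\phi(r)\left(\frac{2}{1-r^2}\right)^{n}.$$
Now $F$ is log convex on $[0,1)$, being a product of log convex functions: $\tilde\phi$ by the previous sentence, and $(2/(1-r^2))^n$ because $-n\log(1-r^2)$ has second derivative $2n(1+r^2)/(1-r^2)^2>0$. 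Moreover $F$ is smooth on $[0,1)$ and strictly increasing, since the conformal factor is.

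Next I would invoke Propositions~\ref{ExistNBall} and~\ref{noBoundary}, whose hypotheses are exactly that the volume density be continuous, nondecreasing and radial with perimeter density $2\cosh^2(R/2)$ times it: for every prescribed volume there is an isoperimetric region $E$, and $E$ is contained in some ball $B(r_0)$ with $r_0<1$. In particular $E$ lies in a compact subset of the open ball on which $F$ and all its derivatives are bounded. Then Chambers' argument [Ch] can be run: although his theorem is stated for all of $\mathbb{R}^n$, the portion used here—that a bounded isoperimetric region for a smooth radial log convex density is a metric ball about the origin, and is the unique such region when the density is nonconstant (here strictly increasing)—is purely local and variational and depends only on the density near $E$, where $F$ is smooth and log convex. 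Hence $E$ is a Euclidean ball centered at the origin, which under the Poincar\'e identification is a hyperbolic metric ball about the origin. This shows spheres about the origin are uniquely isoperimetric in $\mathbb{H}^n$ with the stated densities.

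The step I expect to require the most care is the last one: verifying that Chambers' local characterization truly uses no global feature of $\mathbb{R}^n$—for instance in his spherical symmetrization step or in comparison regions that reach to infinity. The resolution is that existence and boundedness have been secured independently in Propositions~\ref{ExistNBall} and~\ref{noBoundary}, so only the part of [Ch] that constrains an \emph{already bounded} minimizer is needed, and that part is insensitive to the behavior of $F$ near $\partial B^n$; still, one should read through [Ch] and confirm that each estimate invoked is local to the region.
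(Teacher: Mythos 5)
Your proposal is correct and follows essentially the same route as the paper: reduce to the open unit Euclidean ball with a single density via Proposition~\ref{equiv}, verify log convexity of the transplanted density using Lemma~\ref{convex} (plus log convexity of the conformal factor), invoke Propositions~\ref{ExistNBall} and~\ref{noBoundary} for existence and boundedness, and then apply Chambers' local analysis to the bounded minimizer. Your added checks --- that log convexity of $\phi$ forces it to be nondecreasing, and that only the local part of [Ch] is needed for an already bounded region --- are exactly the points the paper relies on implicitly.
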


\begin{proof}


By Proposition \ref{equiv}, we may reduce the problem to the open unit Euclidean ball with volume and perimeter density $\phi(2\tanh^{-1}(r))\cdot(2/(1-r^2))^n$. By Lemma \ref{convex}, this is a log convex density on the open unit ball. Because there is infinite weighted distance to the boundary, Proposition \ref{ExistNBall} guarantees that isoperimetric regions exist. These isoperimetric regions are bounded by Proposition \ref{noBoundary}.

Because this isoperimetric region is bounded in a subset of $\mathbb{R}^n$ with smooth, log convex, radial density, we may apply the same local analysis done by Chambers [Ch] on isoperimetric regions in $\mathbb{R}^n$. From this, we conclude that spheres about the origin are isoperimetric, uniquely because the density is strictly log convex.

\end{proof}

\begin{remark}

The proof of our main Theorem \ref{weakHyperbolic} compared hyperbolic space with perimeter density much larger than volume density to Euclidean space with equal densities. Since the log convex density conjecture for $\mathbb{H}^n$ involves equal densities, the comparison would be with the Euclidean ball with perimeter density much smaller than volume density, about which little is known.

\begin{conj}\label{LCDC} (The log-convex density conjecture on hyperbolic space). In $\mathbb{H}^n$ with smooth log convex radial density, every sphere about the origin is isoperimetric.
\end{conj}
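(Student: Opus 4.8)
The plan is to transport the entire problem to the Euclidean ball, where Chambers' theorem [Ch] can be applied, and to supply the two ingredients — existence and boundedness of isoperimetric regions — that the local analysis of [Ch] needs but does not itself provide in this noncompact weighted setting.

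First I would invoke Proposition \ref{equiv} to replace $\mathbb{H}^n$ carrying volume density $\phi(R)$ and perimeter density $2\phi(R)\cosh^2(R/2)$ by the open unit Euclidean ball carrying the single density $f(r)=\phi(2\tanh^{-1}r)\cdot(2/(1-r^2))^n$ for both volume and perimeter; isoperimetric regions correspond under this equivalence. Next I would apply Lemma \ref{convex} with $g(r)=\tanh(r/2)$ (which has $g'>0$, $g''<0$), using also that a log convex radial density on $\mathbb{H}^n$ forces $\phi$ to be nondecreasing, to conclude that $f$ is log convex on $[0,1)$, indeed strictly so since $\phi$ is strictly log convex. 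Observe that the factor $(2/(1-r^2))^n$ makes the weighted radial distance from the origin to the boundary sphere infinite, which is exactly the hypothesis needed downstream.

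With the problem now posed on the Euclidean ball with a smooth, strictly log convex radial density, I would secure existence via Proposition \ref{ExistNBall}: a perimeter-minimizing sequence subconverges by compactness, and the infinite weighted distance to the boundary, combined with the slicing bound $P(r)\ge|E_r|$ of Lemma \ref{projection} and the standard Euclidean isoperimetric inequality, rules out loss of volume out toward the boundary sphere. Boundedness of the minimizer is then Proposition \ref{noBoundary}: if a minimizer were not contained in any sphere about the origin, one truncates it at a large radius $R$ and restores the lost weighted volume near the origin at a cost controlled by the generalized mean curvature $H$; the differential inequalities relating $P_f$, $V_g$ and $\cosh^2(R/2)$ then yield $P_f(R)\ge c\cosh^2(R/2)\,V_g(R)^{(n-1)/n}$, which is enough to produce a strictly better competitor unless $V_g$ vanishes in finite time — a contradiction with $V_g(r)>0$ for all $r$.

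Finally, since the minimizer is bounded it is compactly contained in the open ball, so only the behavior of $f$ on a compact subset of $\mathbb{R}^n$ is relevant; there $f$ is a genuine smooth, strictly log convex radial density, and the local analysis of Chambers [Ch] applies to force the minimizer to be a ball about the origin, uniquely by strict log convexity. Transporting back through Proposition \ref{equiv} yields the theorem. I expect the main obstacle to be the boundedness step (Proposition \ref{noBoundary}): one must choose the truncation radius and the restored volume $\epsilon$ in tandem so that the linear-in-$\epsilon$ curvature cost is dominated by the $\epsilon^{(n-1)/n}$ gain from the perimeter left behind, and then assemble the chain of monotonicity and slicing inequalities into a differential inequality of the shape $c\le-(V_g^{1/n})'(r)$ that cannot persist for all large $r$.
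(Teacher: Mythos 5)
There is a genuine gap, and it is at the very first step: you have proved the wrong statement. The conjecture assigns the \emph{same} density $\phi(R)$ to both volume and perimeter on $\mathbb{H}^n$, whereas your argument begins by invoking Proposition \ref{equiv} for a region with volume density $\phi(R)$ and perimeter density $2\phi(R)\cosh^2(R/2)$. That inflated perimeter density is exactly the hypothesis of Theorem \ref{weakHyperbolic}, and the factor $2\cosh^2(R/2) = 2/(1-\tanh^2(R/2))$ is chosen precisely so that, after passing to Poincar\'{e} coordinates, the weighted area element $\phi\cdot(2/(1-r^2))^{n-1}\cdot(2/(1-r^2))\,dA_0$ matches the weighted volume element $\phi\cdot(2/(1-r^2))^n\,dV_0$ and Chambers' equal-density theorem applies. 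Your proposal is therefore a correct outline of the proof of Theorem \ref{weakHyperbolic}, but it does not address Conjecture \ref{LCDC}.

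For the actual conjecture, the conformal transfer to the unit ball produces volume density $\phi(2\tanh^{-1}r)(2/(1-r^2))^{n}$ but perimeter density only $\phi(2\tanh^{-1}r)(2/(1-r^2))^{n-1}$ --- a perimeter density \emph{smaller} than the volume density by the unbounded factor $2/(1-r^2)$. Chambers' local analysis is stated for a single log convex density weighting both volume and perimeter equally, so it cannot be applied to this unequal pair, and no rescaling removes the discrepancy. This is exactly the obstruction the paper records in the remark preceding the conjecture: the equal-density hyperbolic problem corresponds to a Euclidean ball with perimeter density much smaller than volume density, "about which little is known." The statement remains open; the paper offers no proof of it, and your argument does not supply one.
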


\end{remark}

 \bibliographystyle{alpha}

\begin{thebibliography}{10}
%
\bibitem {BZ}




\bibitem {A} [Ch] Gregory Chambers. Proof of the log convex density conjecture. J. Eur. Math. Soc., to appear.


\bibitem {X} [Mo] Frank Morgan. \textit{Geometric Measure Theory}. 5th ed., 2016.



\bibitem{MP}[MP] Frank Morgan and Aldo Pratelli. Existence of isoperimetric regions in $\R^n$ with density, \textit{Ann. Global Anal. Geom.} 43 (2013), 331-365.
\end{thebibliography}

Lewis \& Clark College 

\url{digiosia@lclark.edu}

\vspace{.5cm}

Williams College 

\url{jih1@williams.edu }

\vspace{.5cm}

Stony Brook University

\url{lea.kenigsberg@stonybrook.edu }

\vspace{.5cm}

Williams College

\url{dsp1@williams.edu }

\vspace{.5cm}

Williams College 

\url{wz1@williams.edu }

\end{document}